\date{}
\newtheorem{definition}{Definition}
\theoremstyle{plain}
\theoremstyle{definition}
\theoremstyle{remark}
\newtheorem{theorem}{Theorem}
\newtheorem{remark}{Remark}
\newtheorem{example}{Example}
\title{The Fractal Lie Derivative: Theory and Applications }
\author{Alireza Khalili Golmankhaneh $^{1,2}$ \thanks{Corresponding author:alirezakhalili@yyu.edu.tr, (or) alireza.khalili@iau.ac.ir (or) alirezakhalili2002@yahoo.co.in}  \\
$^1$ Department of Mathematics, Faculty of Sciences,\\ Van Yuzuncu Yil University, Van, 65080-Campus,  Turkey\\
$^2$ Department of Physics, Ur.C., Islamic Azad University,\\ Urmia 63896, West Azerbaijan, Iran\\
Elham Hashemzadeh $^{3}$\\
$^{3}$ Department of Mathematics, College of Sciences,\\ Urmia University, P.O. Box 5756151818,\\ Urmia, West Azerbaijan, Iran\\
elhamhashemzadeh69@gmail.com\\
Carlo Cattani $^{4,5}$\\
$^4$ Engineering School (DEIM), University of Tuscia,\\ 01100 Viterbo, Italy\\ $^5$ Department of Mathematics and Informatics,\\ Azerbaijan University, Baku AZ1014, Azerbaijan\\
 cattani@unitus.it\\
 Donal O'Regan  $^{6}$\\
 $^{6}$ School of Mathematical and Statistical Sciences,\\ University of Galway, Ireland\\
 donal.oregan@nuigalway.ie\\
Palle E. T. J{\o}rgensen  $^7$\\
$^7$ Department of Mathematics, The University of Iowa, \\Iowa City, IA
52242-1419, USA\\
palle-jorgensen@uiowa.edu\\
}
\date{\today}
\begin{document}

\maketitle

\begin{abstract}
This paper presents a new Lie theoretic approach to fractal calculus, which in turn yields such new results as a Fractal Noether's Theorem, a setting for fractal differential forms, for vector fields, and Lie derivatives, as well as k-fractal jet space, and algorithms for k-th fractal prolongation. The symmetries of the fractal nonlinear \(n\)-th \(\alpha\)-order differential equation are examined, followed by a discussion of the symmetries of the fractal linear \(n\)-th \(\alpha\)-order differential equation. Additionally, the symmetries of the fractal linear first \(\alpha\)-order differential equation are derived. Several examples are provided to illustrated and highlight the details of these concepts.
\end{abstract}
\textbf{Keywords:} Fractal calculus,  Fractal Lie derivatives, k-fractal jet space,  Fractal Symmetries,\\
\textbf{Mathematical Sciences Classification:}   28A80,35B06,35A30

\section{Introduction}
Fractal geometry has been used widely to investigate and characterize recurring patterns found in natural phenomena, such as blood arteries, mountains, clouds, and coastlines. The special characteristics and measurements of these formations are investigated in this topic. Fractional dimensions, self-similarity, and fractal dimensions larger than their topological dimensions are common characteristics of fractals \cite{Mandelbro,Qaswet,falconer1999techniques,feder2013fractals}.

Fractal analysis methods include harmonic analysis \cite{kigami2001analysis,MR3838440,freiberg2002harmonic, Tarasovbook2, jorgensen2006analysis, Withers}, fractional calculus, stochastic processes \cite{Barlowqq1}, and fractional space \cite{stillinger1977axiomaticq}.

Numerous Riemann-type integrals have been constructed by reworking the Fundamental Theorem of Calculus, such as \(s\)-Riemann, \(s\)-HK, and \(s\)-first-return integrals. Scholarly investigations \cite{jiang1998some,Bongiorno2018derivatives, Bongiorno2015fundamental,Bongiorno2015integral} have investigated the relationships between these integrals and the Hausdorff measure-related Lebesgue integral.
Fractals, with their complex geometric structures and self-similarity across scales \cite{juraev2024fractals}, often require methods beyond standard mathematics.
While the  literature on fractal analysis is vast, for our present discussion we stress the following papers offering new tools for Fourier expansions, and wavelets, on certain classes of fractals \cite{dutkay2006wavelets,jorgensen1998dense}.

By expanding classical calculus, a new framework for fractal calculus has been constructed, providing an algorithmic, geometric, and physically meaningful method. Functions with fractal properties, such Koch curves and Cantor sets, can be analyzed thanks to this generalization \cite{parvate2009calculus, GangalConju, satin2013fokker, Alireza-book}.

 Similar to how ordinary local calculus is expanded through Riemann-Liouville and as Caputo generalizations, non-local fractal calculus has been introduced for simulating incompressible viscous fluids in fractal media and processes with memory \cite{khalili2024fractaldd, golmankhaneh2016non, banchuin2022noise, banchuin20224noise}.
The central limit theorem and locality have been preserved in the research of sub-and super-diffusion utilizing fractal local derivatives \cite{golmankhaneh2018sub, Alireza-Fernandez-1}.

By utilizing fractal calculus in physics, new models of fractal space and time have been created \cite{golmankhaneh2021equilibrium, Tosatti, e25071008, Shlesinger-6, Vrobel-3, Welch-5, nottale1993fractal}. These models frequently have solutions that are power law and self-similarity described.

Fractal Laplace, Sumudu, and Fourier transformations applied to Cantor sets and fractal curves have been used to study the dynamics of systems with fractal times \cite{khalili2024fractalawee, khalili2021laplace, golmankhaneh2019sumudu, Fourier1ttttttt, khalili2019fractalcat, golmankhaneh2018fractalt}.
 The derivatives and integrals of many functions, such as the Weierstrass function, have also been calculated using fractal calculus \cite{gowrisankar2021fractal}.
The specification of derivative structures and bounds on fractal curves has been made possible by nonstandard analysis that incorporates hyperreal and hyperinteger integers. Additionally, fractal integral and differential forms have been defined using this method \cite{khalili2023non}. Fractal calculus is now applicable to unbounded functions thanks to gauge function generalizations \cite{golmankhaneh2016fractalgag}. Conditions for guaranteeing distinct and stable solutions have been established by studies on the stability of fractal differential equations \cite{khalili2021hyers}.

An analogue of Noether's Theorem on fractal sets has been presented, along with a suggested corresponding conservative quantity \cite{khalili2019analogues}.

A framework had been provided in this paper for understanding and analyzing non-differentiable fractal manifolds. Specialized mathematical concepts and equations, such as the Metric Tensor, Curvature Tensors, Analogue Arc Length, and Inner Product, had been introduced to enable the study of complex patterns exhibiting self-similarity across various scales and dimensions \cite{golmankhaneh2023einstein,golmankhaneh2024expansion}.

 Fractal retarded, neutral, and renewal delay differential equations with constant coefficients have been solved using the steps technique and the Laplace transform \cite{golmankhaneh2023initial}. Moreover, a new framework that incorporates the mean square derivative, fractal mean square integral, fractal mean square continuity, and the order of random variables on the fractal curve has been introduced \cite{KhaliliGolmankhanehWelchSerpaStamova2024} and generalizes mean square calculus.

A thorough and comprehensive treatment of Lie groups of transformations and their various applications for solving ordinary and partial differential equations has been provided  \cite{bluman2013symmetries,gonzalez1983symmetries}. The systematic identification of both local and nonlocal symmetries, as well as local and nonlocal conservation laws for a given partial differential equation system, has been provided, along with the systematic application of these symmetries and conservation laws for various related applications \cite{bluman2010applications}.
Results on the application of classical Lie point symmetries to problems in fluid draining, meteorology, and the epidemiology of AIDS have been established \cite{nucci1997role}.

The application of group analysis to integro-differential equations in an accessible manner has been explored. This approach has been crafted to benefit both physicists and mathematicians interested in utilizing general methods to investigate nonlinear problems through symmetries \cite{grigoriev2010symmetries,paliathanasis2015symmetries,bluman2008symmetry}.

 Approximate symmetry in nonlinear physical problems has been examined, and complex methods for Lie symmetry analysis have been discussed. Lie group classification, symmetry analysis, and conservation laws have been presented, while conservative difference schemes have been explored \cite{luo2021symmetries}.

Lie group and algebra theory has thrived and been widely applied, with much of modern elementary particle physics essentially being a physical interpretation of group theory. However, the application of symmetry methods to differential equations has remained overlooked \cite{gaeta2012nonlinear}.

 The key notions and results necessary for finding higher symmetries and conservation laws for general systems of partial differential equations have been presented \cite{vinogradov2012symmetries}.

 Also an accessible approach  has presented  to the use of symmetry methods in solving both ordinary differential equations (ODEs) and partial differential equations (PDEs) \cite{arrigo2015symmetry,hydon2000symmetries,olver1992internal,oliveri2004lie,gonzalez1988symmetries,yates2009structural,miller1973symmetries,cicogna2001partial}.

 The concept of the complete symmetry group of a differential equation has been extended to integrals of such equations. The algebras of the symmetries of both differential equations and their integrals have been studied in the context of equations represented by point or contact symmetries, ensuring clarity regarding the group. It has been found that both algebras and groups are nonunique \cite{starrett2007solving,gracia2002symmetries}.
Lie group theory has been applied to differential equations that serve as mathematical models in financial problems \cite{gazizov1998lie}.

The primary objective of this paper is to present fractal Lie derivatives and explore its applications. The structure of the paper is as follows: \\
Section \ref{S-1} presents a concise review of fractal calculus. In Section \ref{S-2}, we present  fractal Noether's Theorem. Fractal differential forms, vector fields, Lie derivatives, the \(k\)-fractal jet space, and the \(k\)-th fractal prolongation are introduced and defined in Section \ref{S-3}. Section \ref{S-4} explores the symmetries of the fractal nonlinear \(n\)-th \(\alpha\)-order differential equation, while Section \ref{S-5} discusses the symmetries of the fractal linear \(n\)-th \(\alpha\)-order differential equation. In Section \ref{S-6}, we derive the symmetries of the fractal linear first \(\alpha\)-order differential equation. Finally, Section \ref{S-7} offers concluding remarks.

\section{Fundamental Definitions in Fractal Calculus
   \label{S-1}}

In this section, we present an overview of fractal calculus as it relates to the Cantor set \( F \subset [c, d] \subset \mathbb{R} \), drawing on insights from \cite{parvate2009calculus,Alireza-book}.
\begin{definition}
The indicator function \(\mathbb{I}_{F}(K)\) for the set \(F\) is defined by
\[
\mathbb{I}_{F}(K) =
\begin{cases}
    1, & \text{if } F \cap K \neq \emptyset, \\
    0, & \text{otherwise},
\end{cases}
\]
where \( K = [c, d] \subset \mathbb{R} \).
\end{definition}

\begin{definition}
The coarse-grained measure \(\mu_{\epsilon}^{\gamma}(F, c, d)\) of \( F \cap [c, d] \) is given by
\[
\mu_{\epsilon}^{\gamma}(F, c, d) = \inf_{|\mathcal{P}| \leq \epsilon} \sum_{i=0}^{m-1} \Gamma(\gamma+1) (z_{i+1} - z_i)^{\gamma} \mathbb{I}_{F}([z_i, z_{i+1}]),
\]
where \( |\mathcal{P}| = \max_{0 \leq i \leq m-1} (z_{i+1} - z_i) \), \( \mathcal{P}_{[c, d]}=\{z_{0}=c,z_{1},...,z_{m}=d\} \), \( 0 < \gamma \leq 1 \), and \(\Gamma(\cdot)\) denotes the Gamma function.
\end{definition}

\begin{definition}
The measure function \(\mu^{\gamma}(F, c, d)\) for \( F \) is defined as
\[
\mu^{\gamma}(F, c, d) = \lim_{\epsilon \rightarrow 0} \mu_{\epsilon}^{\gamma}(F, c, d).
\]
\end{definition}
Some properties of the measure function are:
\begin{enumerate}
  \item \(\mu^{\gamma}(F + \epsilon, c + \epsilon, d + \epsilon) = \mu^{\gamma}(F, c, d)\). \\This property states that shifting the variables \(F\), \(c\), and \(d\) by a common increment \(\epsilon\) does not change the value of \(\mu^{\gamma}\).

  \item \(\mu^{\gamma}(\epsilon F, \epsilon c, \epsilon d) = \epsilon^{\gamma} \mu^{\gamma}(F, c, d)\).\\ This property shows that scaling \(F\), \(c\), and \(d\) by a factor of \(\epsilon\) results in scaling \(\mu^{\gamma}\) by \(\epsilon^{\gamma}\).
\end{enumerate}

\begin{definition}\label{iiiitttt}
The fractal \(\nu\)-dimension of \( F \cap [c, d] \) is defined by
\begin{equation}\label{w342lmv}
\begin{split}
\dim_{\nu}(F \cap [c, d]) &= \inf\{\gamma : \mu^{\gamma}(F, c, d) = 0\}\\
&= \sup\{\gamma : \mu^{\gamma}(F, c, d) = \infty\}.
\end{split}
\end{equation}
\end{definition}

\begin{definition}
The integral staircase function \( S_{F}^{\gamma}(z) \) is defined as
\[
S_{F}^{\gamma}(z) =
\begin{cases}
    \mu^{\gamma}(F, c_0, z), & \text{if } z \geq c_0, \\
    -\mu^{\gamma}(F, z, c_0), & \text{otherwise},
\end{cases}
\]
where \( c_0 \in \mathbb{R} \) is a fixed constant.
\end{definition}

\begin{definition}
The characteristic function \(\chi_{F}(z)\) on a fractal set \(F\) is defined by
\begin{equation}
  \chi_{F}(z)=\left\{
                \begin{array}{ll}
                  \Gamma(\gamma+1), & \text{if } z \in F, \\
                  0, & \text{otherwise}.
                \end{array}
              \right.
\end{equation}
Here, \(\gamma\) denotes the fractal dimension as defined in \eqref{w342lmv}.
\end{definition}

\begin{definition}
For a function \( k: F \rightarrow \mathbb{R} \), the \( F_{-}\text{lim} \) of \( k(z) \) at a point \( z \in F \) is the number \(\ell\) such that for every \( \epsilon > 0 \), there exists a \( \delta > 0 \) satisfying
\[
y \in F \text{ and } |y - z| < \delta \implies |k(y) - \ell| < \epsilon.
\]
If such an \(\ell\) exists, it is denoted by
\[
\ell = \underset{ y\rightarrow z}{F_{-}\text{lim}}~ k(y).
\]
This definition does not involve the function values at \( y \) if \( y \notin F \).
\end{definition}

\begin{definition}
A function \( k: F \rightarrow \mathbb{R} \) is said to be \( F \)-continuous at \( z \in F \) if
\begin{equation}
k(z) = \underset{ y\rightarrow z}{F_{-}\text{lim}}~ k(y),
\end{equation}
whenever the \( F_{-}\text{lim} \) exists.
\end{definition}

\begin{definition}
For a function \( k \) on a \(\gamma\)-perfect fractal set \( F \), the \( F^{\gamma} \)-derivative of \( k \) at \( z \) is defined as
\[
D_{F}^{\gamma}k(z) =
\begin{cases}
    \underset{y \rightarrow z}{F_{-}\text{lim}} \frac{k(y) - k(z)}{S_{F}^{\gamma}(y) - S_{F}^{\gamma}(z)}, & \text{if } z \in F, \\
    0, & \text{otherwise},
\end{cases}
\]
provided that the \( F_{-}\text{lim} \) exists.
\end{definition}

\begin{definition}
The \( F^{\gamma} \)-integral of a bounded function \( k(z) \), where \( k \in B(F) \) (i.e., \( k \) is bounded on \( F \)), is defined as
\begin{align}
\int_{c}^{d} k(z) \, d_{F}^{\gamma}z &= \sup_{\mathcal{P}_{[c, d]}} \sum_{i=0}^{m-1} \inf_{z \in F \cap K} k(z) \, (S_{F}^{\gamma}(z_{i+1}) - S_{F}^{\gamma}(z_i)) \\&= \inf_{\mathcal{P}_{[c, d]}} \sum_{i=0}^{m-1} \sup_{z \in F \cap K} k(z) \, (S_{F}^{\gamma}(z_{i+1}) - S_{F}^{\gamma}(z_i)),
\end{align}
where \( z \in F \), and the infimum or supremum is taken over all partitions \( \mathcal{P}_{[c, d]} \) \cite{parvate2009calculus,Alireza-book}.
\end{definition}
\section{Fractal Noether's Theorem \label{S-2}}
In this section, we explore generalized Noether's Theorem for fractal sets. The Fractal Noether's Theorem offers a rigorous mathematical connection between fractal symmetries and conservation laws in systems described by fractal Lagrangian mechanics \cite{khalili2019analogues}.
\begin{theorem}
  Consider a system with a fractal Lagrangian \( L(q_i, D_{F}^{\alpha}q_i, t) \), where \( q_i \) are the generalized coordinates, \( D_{F}^{\alpha}q_i \) are their fractal time derivatives, and \( t \) represents fractal time. The action \( S \) of the system is defined by:

\[
S = \int_{t_1}^{t_2} L(q_i, D_{F}^{\alpha}q_i, t) \, d_{F}^{\alpha}t.
\]
This Fractal Noether's theorem states that if the action \( S \) remains invariant under a fractal continuous transformation given by:

\[
q_i \to q_i' = q_i + \epsilon \eta_i(q, D_{F}^{\alpha}q_i, t),
\]

where \( \epsilon \) is a small parameter and \( \eta_i \) are the fractal infinitesimal generators of the transformation, then there exists a conserved quantity associated with this fractal symmetry:

\[
J = \frac{\partial L}{\partial (D_{F}^{\alpha}q_i)} \eta_i - f,
\]

where \( f \) is a function whose fractal time derivative can be expressed in a form that cancels out when fractal integration by parts is applied. The fractal conservation law is expressed as:

\[
D_{F}^{\alpha}J = 0.
\]

This implies that the quantity \( J \) is conserved, corresponding to the conservation law associated with the fractal symmetry of the system.
\end{theorem}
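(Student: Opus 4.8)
The plan is to mirror the classical derivation of Noether's theorem, replacing the ordinary time derivative by the $F^{\alpha}$-derivative $D_{F}^{\alpha}$ and the ordinary integral by the $F^{\alpha}$-integral $\int d_{F}^{\alpha}t$, leaning on three structural facts of fractal calculus: a fractal Leibniz rule $D_{F}^{\alpha}(uv)=(D_{F}^{\alpha}u)\,v+u\,(D_{F}^{\alpha}v)$, a fractal integration-by-parts formula descending from the fractal fundamental theorem of calculus, and the commutation of the infinitesimal variation with the fractal derivative, $\delta(D_{F}^{\alpha}q_i)=D_{F}^{\alpha}(\delta q_i)$. Each of these can be read off from the fact that $D_{F}^{\alpha}$ is conjugate to ordinary differentiation through the staircase $S_{F}^{\alpha}$.

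First I would establish the fractal Euler--Lagrange equations. Imposing stationarity of $S$ under variations $\delta q_i$ that vanish at the fractal endpoints $t_1,t_2$, I would expand $\delta L=\frac{\partial L}{\partial q_i}\delta q_i+\frac{\partial L}{\partial(D_{F}^{\alpha}q_i)}\delta(D_{F}^{\alpha}q_i)$, rewrite the second term via the commutation property, and integrate by parts in the $F^{\alpha}$-sense to move the fractal derivative off $\delta q_i$. The boundary contribution drops because $\delta q_i$ vanishes at the endpoints, and since $\delta q_i$ is otherwise arbitrary on $F$, the integrand must vanish, yielding $\frac{\partial L}{\partial q_i}-D_{F}^{\alpha}\frac{\partial L}{\partial(D_{F}^{\alpha}q_i)}=0$.

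Next I would compute the variation of $L$ along the symmetry transformation $q_i\to q_i+\epsilon\eta_i$. To first order, $\delta L=\epsilon\big[\frac{\partial L}{\partial q_i}\eta_i+\frac{\partial L}{\partial(D_{F}^{\alpha}q_i)}D_{F}^{\alpha}\eta_i\big]$. Applying the fractal Leibniz rule to $D_{F}^{\alpha}\big(\frac{\partial L}{\partial(D_{F}^{\alpha}q_i)}\eta_i\big)$ lets me trade $\frac{\partial L}{\partial(D_{F}^{\alpha}q_i)}D_{F}^{\alpha}\eta_i$ for a total fractal derivative minus $\big(D_{F}^{\alpha}\frac{\partial L}{\partial(D_{F}^{\alpha}q_i)}\big)\eta_i$. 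Collecting terms gives $\delta L=\epsilon\big[\big(\frac{\partial L}{\partial q_i}-D_{F}^{\alpha}\frac{\partial L}{\partial(D_{F}^{\alpha}q_i)}\big)\eta_i+D_{F}^{\alpha}\big(\frac{\partial L}{\partial(D_{F}^{\alpha}q_i)}\eta_i\big)\big]$, in which the first bracket vanishes by the Euler--Lagrange equations. Invariance of the action means $\delta L$ is at most a total fractal derivative $\epsilon D_{F}^{\alpha}f$, so $D_{F}^{\alpha}\big(\frac{\partial L}{\partial(D_{F}^{\alpha}q_i)}\eta_i-f\big)=0$, which is precisely $D_{F}^{\alpha}J=0$ for the stated $J$.

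The main obstacle I anticipate is justifying the three structural facts rigorously within the $F^{\alpha}$-calculus, particularly the interchange $\delta(D_{F}^{\alpha}q_i)=D_{F}^{\alpha}(\delta q_i)$ and the validity of integration by parts with controlled boundary terms on the fractal support $F$. Because $D_{F}^{\alpha}$ is defined only through the $F_{-}\text{lim}$ on the $\gamma$-perfect set, I would verify each identity by passing to the transformed variable $S_{F}^{\alpha}(t)$, where the fractal derivative reduces to an ordinary derivative, and then check that treating $L$, $\eta_i$, and $f$ as $F$-continuous, $F^{\alpha}$-differentiable functions provides enough regularity to make the classical manipulations legitimate and to guarantee that the endpoint terms genuinely cancel.
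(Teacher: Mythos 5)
Your proposal is correct and lands on the stated conservation law, but it takes a genuinely different route from the paper's own proof. The paper works globally with the variation of the action: it asserts \(\delta S = 0\), invokes the fractal Euler--Lagrange equations as a citation rather than deriving them, performs fractal integration by parts directly on \(\delta S\) to produce a boundary term plus an integral of a total fractal derivative, and then defines the boundary function by the nonstandard quotient \(f = L\, d_{F}^{\alpha}\eta_i / d_{F}^{\alpha}\dot{q}_i\) (with \(\dot{q}_i = D_{F}^{\alpha}q_i\)), supported by an auxiliary expansion of \(d_{F}^{\alpha}\eta_i\). You instead argue pointwise at the level of the integrand: you derive the fractal Euler--Lagrange equations yourself from stationarity under endpoint-vanishing variations, use the fractal Leibniz rule to trade \(\frac{\partial L}{\partial(D_{F}^{\alpha}q_i)}D_{F}^{\alpha}\eta_i\) for \(D_{F}^{\alpha}\bigl(\frac{\partial L}{\partial(D_{F}^{\alpha}q_i)}\eta_i\bigr)\) minus the Euler--Lagrange combination, and let \(f\) enter as the total-derivative term permitted by invariance, \(\delta L = \epsilon D_{F}^{\alpha}f\), so that \(D_{F}^{\alpha}J = 0\) follows as an identity along solutions. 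Your version buys three things the paper does not supply: a self-contained derivation of the Euler--Lagrange equations; a transparent, standard origin for \(f\) that matches the theorem's own description of it (the term cancelling under integration by parts) while avoiding the paper's opaque differential quotient; and an explicit program --- conjugation through the staircase \(S_{F}^{\alpha}\) --- for legitimizing the fractal Leibniz rule, integration by parts, and the interchange \(\delta(D_{F}^{\alpha}q_i) = D_{F}^{\alpha}(\delta q_i)\), all of which the paper uses silently. One caution for your write-up: exact invariance of \(S\) over every subinterval would force \(f\) to be constant and \(J\) to reduce to \(\frac{\partial L}{\partial(D_{F}^{\alpha}q_i)}\eta_i\), so you should state explicitly that you assume invariance only up to a boundary term (quasi-invariance); the paper is equally loose on this point, so this is a shared imprecision in the theorem's formulation rather than a gap in your argument.
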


\begin{proof}
 For an \( F \)-continuous symmetry, the action \( S \) remains invariant under the transformation. This implies that the first-order variation in the action with respect to the small parameter \( \epsilon \) is zero:

\begin{equation}\label{eq:variation}
   \delta S = \int_{t_1}^{t_2} \left( \frac{\partial L}{\partial q_i} \eta_i + \frac{\partial L}{\partial D_{F}^{\alpha}q_i} D_{F}^{\alpha} \eta_i \right) d_{F}^{\alpha}t = 0.
\end{equation}

The dynamics of the system are governed by the fractal Euler-Lagrange equations \cite{golmankhaneh2025fractalvar}:

\begin{equation}\label{eq:fractaleulerlagrange}
   D_{F}^{\alpha} \left( \frac{\partial L}{\partial D_{F}^{\alpha}q_i} \right) - \frac{\partial L}{\partial q_i} = 0.
\end{equation}

Using Eqs. \eqref{eq:variation} and \eqref{eq:fractaleulerlagrange}, and applying fractal integration by parts, the variation in the action can be rewritten as:

\[
   \delta S = \epsilon \left[ \frac{\partial L}{\partial D_{F}^{\alpha}q_i} \eta_i \right]_{t_1}^{t_2} + \epsilon \int_{t_1}^{t_2} D_{F}^{\alpha} \left( \frac{\partial L}{\partial D_{F}^{\alpha}q_i} \eta_i - L \frac{d_{F}^{\alpha} \eta_i}{d_{F}^{\alpha} \dot{q}_i} \right) d_{F}^{\alpha}t = 0,
\]

where \(\dot{q}_i = D_{F}^{\alpha}q_i\), and the differential of \(\eta_i\) in fractal time is given by:

\begin{equation}\label{eq:diff_eta}
  d_{F}^{\alpha} \eta_i = \frac{\partial \eta_i}{\partial D_{F}^{\alpha}q_i} d_{F}^{\alpha} \dot{q}_i + \frac{\partial \eta_i}{\partial q_i} d_{F}^{\alpha}q_i + D_{F}^{\alpha} \eta_i \, d_{F}^{\alpha}t.
\end{equation}
Since the action is invariant under the transformation, the integrand must be a total fractal time derivative, leading to the conserved Noether current:
\[
J =  \frac{\partial L}{\partial D_{F}^{\alpha}q_i} \eta_i - f,
\]

where the function \( f \) is defined as:

\begin{equation}\label{eq:def_f}
  f = L \frac{d_{F}^{\alpha} \eta_i}{d_{F}^{\alpha} \dot{q}_i}.
\end{equation}
This establishes the conserved quantity associated with the fractal symmetry of the system, completing the proof.
\end{proof}

\begin{example}
For a system with a Lagrangian \( L(q_i, D_{F}^{\alpha}q_i, t) \) that is invariant under spatial translations, specifically under the transformation \( q_i \to q_i' = q_i + \epsilon \), where \( \epsilon \) is a small parameter representing the infinitesimal translation, the corresponding conserved quantity is the total linear momentum.

To derive this, we start by recognizing that the invariance under spatial translations implies \( \eta_i = 1 \). Substituting this into the expression for the conserved quantity \( J \) from Noether's theorem, we have:

\[
J = \frac{\partial L}{\partial D_{F}^{\alpha}q_i} \eta_i - f.
\]

With \( \eta_i = 1 \), the expression simplifies to:

\[
J = \frac{\partial L}{\partial D_{F}^{\alpha}q_i} - f.
\]

In this case, since the Lagrangian does not explicitly depend on the spatial coordinates \( q_i \), the term \( f \) vanishes because no additional terms arise from the integration by parts. Thus, the conserved quantity reduces to:

\[
J = \frac{\partial L}{\partial D_{F}^{\alpha}q_i}.
\]

This quantity \( J \) represents the generalized momentum associated with each coordinate \( q_i \).

Considering a classical system where the Lagrangian takes the form:

\[
L = \frac{1}{2} m (D_{F}^{\alpha}q_i)^2 - V(q_i, t),
\]

with \( m \) as the mass and \( V(q_i, t) \) as the potential energy, the generalized momentum is given by:

\[
\frac{\partial L}{\partial D_{F}^{\alpha}q_i} = m D_{F}^{\alpha}q_i.
\]

Therefore, the conserved quantity \( J \) corresponds directly to the classical momentum, which can be expressed as \( p_i = m D_{F}^{\alpha}q_i \). This mathematical framework succinctly captures the essence of Noether's theorem: each $F$-continuous symmetry corresponds to a specific conservation law. In this instance, the symmetry under fractal spatial translations leads to the conservation of fractal linear momentum.
\end{example}

\section{Fractal Differential Forms, Vector Fields, and Lie Derivatives \label{S-3}}
In this section, we introduce Fractal Differential Forms, Vector Fields, and Lie Derivatives. A \(k\)-form is a type of differential form that extends the concepts of functions, vectors, and higher $\alpha$-dimensional objects from multivariable calculus to higher fractal dimensions. It is an antisymmetric tensor that can be integrated over a \(k\)-dimensional fractal surface.
\begin{definition}
A fractal multivariable scalar function is a function defined as
\begin{equation}\label{12}
  f: F^n \rightarrow \mathbb{R},
\end{equation}
where \(F^n = F \times F \times \cdots \times F\) (the Cartesian product of \(n\) copies of the fractal set \(F\)), and the function maps points in this fractal space to real values.
\end{definition}
\begin{example}
Consider a fractal multivariable scalar function defined over the Cartesian product of two copies of the Cantor set:
\[
f(x, y) = \sin(x)  \cos(y),
\]
where \(x, y \in F^2\), and \(F^2 = F \times F\) represents the Cartesian product of two copies of the fractal set \(F\).

The function maps each point \((x, y)\) in this fractal space to a real value by applying the sine function to \(x\) and the cosine function to \(y\). This is a simple example of how fractal structures can be used in multivariable scalar functions.
\begin{figure}[H]
    \centering
    \includegraphics[width=0.8\textwidth]{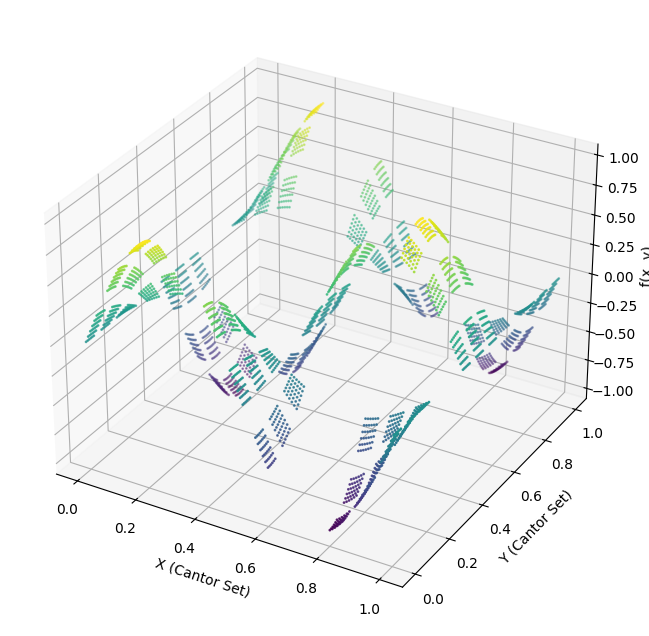}
    \caption{3D plot of the function $f(x, y) = \sin(2\pi x) \cos(2\pi y)$ evaluated on a Cantor set $\times$ Cantor set. The plot visualizes the structure of $f(x, y)$ over the fractal space formed by the Cantor set.}
    \label{fig:cantor_plot}
\end{figure}
As shown in Figure~\ref{fig:cantor_plot}, the function $f(x, y)$ exhibits fractal characteristics when evaluated on the Cantor set grid.
\end{example}
\begin{definition}
A fractal multivariable vector function is a function defined as
\begin{equation}\label{1qw2}
  X: F^{n} \rightarrow \mathbb{R}^{m},
\end{equation}
where  the function maps points in this space to vectors in \(\mathbb{R}^m\).
\end{definition}

\begin{example}

Let \( F \subset \mathbb{R} \) be the Cantor set, and consider the fractal multivariable vector function \( X: F^2 \rightarrow \mathbb{R}^2 \), defined as:
\[
X(x, y) = \begin{pmatrix} \sin(2\pi x) \\ \cos(2\pi y) \end{pmatrix},
\]
where \( (x, y) \in F^2 \), meaning both \( x \) and \( y \) are points from the Cantor set. In this example, the function maps each pair of points \( (x, y) \) from the fractal space \( F^2 \) into a vector in \( \mathbb{R}^2 \). Specifically, the first component is determined by \( \sin(2\pi x) \), and the second component by \( \cos(2\pi y) \), both periodic functions that provide a smooth, continuous map on the fractal domain.
\begin{figure}[H]
    \centering
    \includegraphics[width=0.8\textwidth]{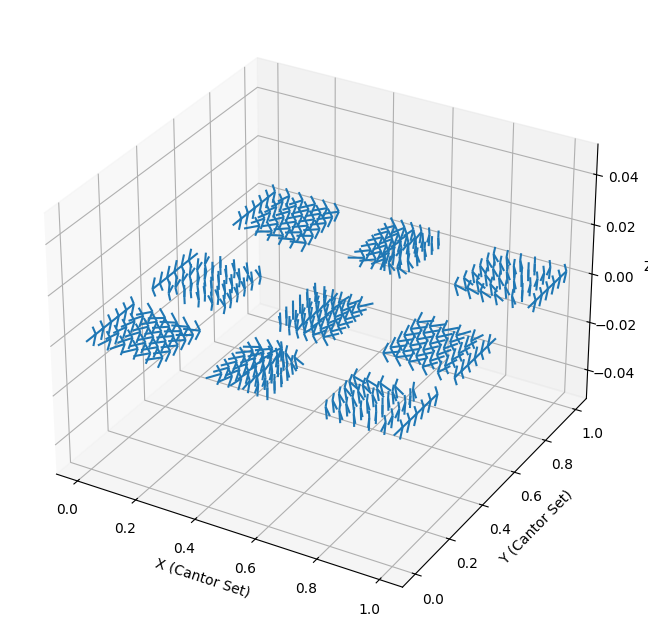} 
    \caption{The figure shows the projection of the vector field, with the x-component of the field represented as \( U = \sin(2\pi X) \) and the y-component as \( V = \cos(2\pi Y) \), where \( X \) and \( Y \) are points from the Cantor set.
    }
    \label{fig:cantor_vector_field}
\end{figure}
In Figure we have plotted \ref{fig:cantor_vector_field} which representation offers insight into how fractal geometries interact with vector fields, which can have applications in modeling physical systems within fractal spaces.
\end{example}
\begin{definition}
A fractal vector field is defined as:
\[
  X_{F}: F^n \rightarrow \mathbb{R}^n,
\]
where  the vector field assigns vectors in \(\mathbb{R}^n\) to each point in \(F^n\).
\end{definition}
\begin{example}
Let \( F \subset \mathbb{R} \) be the Cantor set, and consider the fractal vector field \( X_F: F^2 \rightarrow \mathbb{R}^2 \) defined as:
\[
X_F(x, y) = \begin{pmatrix} x(1 - x) \\ y(1 - y) \end{pmatrix},
\]
where \( (x, y) \in F^2 \).

In this example, the fractal vector field \( X_F \) assigns a vector in \( \mathbb{R}^2 \) to each point \( (x, y) \) from the fractal space \( F^2 \). The first component of the vector at each point is given by \( x(1 - x) \), and the second component by \( y(1 - y) \). This choice ensures that the vector field remains within the bounded fractal space and demonstrates how vectors can vary smoothly over a fractal domain.
\end{example}
\begin{definition}
A fractal tensor field is a function defined as:
\[
  \mathbf{T}: F^n \rightarrow T^r_s(\mathbb{R}^m),
\]
where  the tensor field assigns a \((r, s)\)-tensor in \(\mathbb{R}^m\) to each point in \(F^n\). Here, \(T^r_s(\mathbb{R}^m)\) denotes the space of \((r, s)\)-tensors, which are multilinear maps that take \(r\) vectors and \(s\) covectors (dual vectors) and output a real number. A fractal tensor field assigns a \((r, s)\)-tensor to each point in the fractal space \(F^n\).
\end{definition}
\begin{definition}
A fractal tangent vector \(v\) at a point \(p\) on a fractal manifold \(M_F\) is a linear map that acts on fractal functions \(f: M_F \rightarrow \mathbb{R}\). It represents how the function \(f\) changes in the direction specified by \(v\). Specifically, \(v\) satisfies a generalized Leibniz rule:
\[
v(fg) = v(f)g(p) + f(p)v(g),
\]
for any fractal smooth  functions \(f\) and \(g\). The set of all fractal tangent vectors at \(p\) forms the  fractal tangent space \(T_p M_F\), capturing the local behavior of the fractal structure around \(p\).
\end{definition}
\begin{definition}
A \(k\)-fractal form $\omega_{F}$ on a fractal smooth manifold \(M_{F}\) is an object that assigns to every point on the fractal manifold. That means it takes in \(k\) fractal tangent vectors at each point on the fractal manifold and outputs a real number, satisfying the following properties:
\begin{enumerate}
  \item Linearity: A \(k\)-fractal form is linear in each of its arguments. For fractal vectors \(X_1, X_2, \dots, X_k\) and scalars \(\zeta\), \(\eta\), we have:
   \[
   \omega_{F}(\zeta\, X_1 + \eta\, Y_1, X_2, \dots, X_k) = \zeta\, \omega_{F}(X_1, X_2, \dots, X_k) + \eta\, \omega_{F}(Y_1, X_2, \dots, X_k).
   \]
  \item Antisymmetry: A \(k\)-fractal form changes sign when two of its arguments are swapped. For example:
   \[
   \omega(X_1, X_2, \dots, X_i, X_{i+1}, \dots, X_k) = -\omega(X_1, X_2, \dots, X_{i+1}, X_i, \dots, X_k).
   \]
   This implies that if two vectors in the set \(X_1, \dots, X_k\) are the same, the form evaluates to zero.
\end{enumerate}
\end{definition}
\begin{example}
 A 0-fractal form is just a fractal smooth function \(f\) on the fractal manifold \(M_{F}\). Let \( F \subset \mathbb{R} \) be the Cantor set, and consider the 0- fractal form \( f: F \rightarrow \mathbb{R} \)  defined as:
\begin{equation}\label{4312}
f(x) = \sin(2\pi x),
\end{equation}
where \( x \in F \). \\

Figure \ref{fig:cantor-0form} shows the plot of the 0-fractal form \( f(x) = \sin(2 \pi x) \) evaluated on the Cantor set. The function assigns a fractal smooth scalar value to each point in the Cantor set.
\begin{figure}[H]
    \centering
    \includegraphics[width=0.8\textwidth]{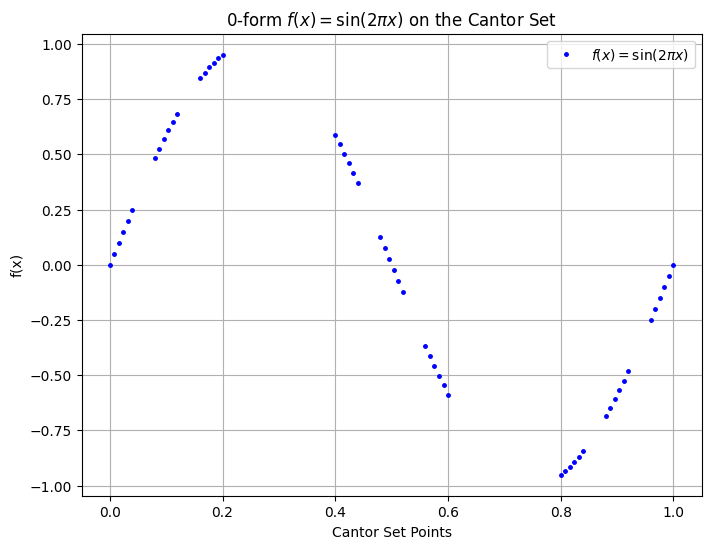} 
    \caption{Plot of the 0-fractal form Eq.\eqref{4312} on the Cantor set. The function values are represented as individual points corresponding to the Cantor set elements.}
    \label{fig:cantor-0form}
\end{figure}
\end{example}
 \begin{example}
 A 1-fractal form can be thought of as a fractal covector or a fractal dual vector. In local coordinates \(\{x^1, x^2, \dots, x^n\}\), a 1-fractal form is written as:
  \[
  \omega_{F} = \sum_{i=1}^n \omega_i(x) d_{F}^{\alpha}x^i,
  \]
  where the \(d_{F}^{\alpha}x^i\) are the fractal coordinate differentials and \(\omega_i(x):F\rightarrow \mathbb{R}\) are fractal smooth functions.
\end{example}
\begin{example} A 2-fractal form is written in local coordinates as:
  \[
  \omega_{F} = \sum_{i < j} \omega_{ij}(x) \, d_{F}^{\alpha}x^i \wedge  d_{F}^{\alpha} x^j,
  \]
  where \( d_{F}^{\alpha}x^i \wedge  d_{F}^{\alpha}x^j\) is the wedge product of the fractal differentials, and the coefficients \(\omega_{ij}(x)\) are fractal smooth functions.
\end{example}
\begin{example} A \(k\)-fractal form in local coordinates is written as:
  \[
  \omega_{F} = \sum_{i_1 < i_2 < \cdots < i_k} \omega_{i_1 i_2 \cdots i_k}(x) \, d_{F}^{\alpha}x^{i_1} \wedge d_{F}^{\alpha}x^{i_2} \wedge \cdots \wedge d_{F}^{\alpha}x^{i_k},
  \]
  where the wedge product \(\wedge\) ensures the antisymmetry property of the form. A \(k\)-fractal form can be fractal integrated over a \(k\alpha\)-dimensional oriented fractal surface or fractal manifold. For example, in the case of a 2-fractal form, one can think of it as something that can be fractal integrated over a fractal surface in $3\alpha$-dimensional space.
\end{example}
\begin{definition}
Given a differential \(k\)- fractal form \(\omega_{F}\) on a fractal smooth manifold \(M_{F}\), the fractal exterior derivative \(d_{F}^{\alpha}\omega\) is a \((k+1)\)-fractal form that satisfies the following properties:
\begin{enumerate}
  \item Linearity: \(d_{F}^{\alpha}(\omega_1 + \omega_2) = d_{F}^{\alpha}\omega_1 + d_{F}^{\alpha}\omega_2\), where \(\omega_1\) and \(\omega_2\) are \(k\)-fractal forms.
  \item Leibniz Rule: If \(\omega_1\) is a \(k\)-fractal form and \(\omega_2\) is an \(\ell\)-fractal form, then:
   \[
   d_{F}^{\alpha}(\omega_1 \wedge \omega_2) = d_{F}^{\alpha}\omega_1 \wedge \omega_2 + (-1)^k \omega_1 \wedge d_{F}^{\alpha}\omega_2.
   \]
   \item Double derivative is zero: \(d_{F}^{\alpha}(d_{F}^{\alpha}\omega_{F}) = 0\) for any fractal differential form \(\omega_{F}\). This means the fractal exterior derivative is nilpotent.
\end{enumerate}
\end{definition}
\begin{example}
In local coordinates, for a \(k\)-fractal form \(\omega_{F} = \sum_{i_1, \ldots, i_k} \omega_{i_1, \ldots, i_k} d_{F}^{\alpha}x^{i_1} \wedge \cdots \wedge d_{F}^{\alpha}x^{i_k}\), the fractal exterior derivative \(d_{F}^{\alpha}\omega_{F}\) is given by:
\[
d_{F}^{\alpha}\omega_{F} = \sum_{i_1, \ldots, i_k} D_{F,x^j}^{\alpha} \omega_{i_1, \ldots, i_k} d_{F}^{\alpha}x^j \wedge d_{F}^{\alpha}x^{i_1} \wedge \cdots \wedge d_{F}^{\alpha}x^{i_k}.
\]
The fractal exterior derivative takes a fractal differential \(k\)-fractal form and returns a \((k+1)\)-fractal form, allowing for a generalization of concepts like fractal gradient, fractal curl, and fractal divergence in fractal vector calculus.
\end{example}
\begin{definition}
Given a fractal smooth manifold, let \(X_{F}\) be a fractal vector field on the fractal manifold, and let \(f\) be a $F^{\alpha}$-differentiable function. The fractal Lie derivative of \(f\) with respect to \(X_{F}\), denoted by \(\mathcal{L}_{X_{F}} f\), fractal measures the rate of change of \(f\) along the flow generated by \(X_{F}\). For a function \(f\), the fractal Lie derivative is simply the fractal directional derivative:
\[
\mathcal{L}_{X_{F}} f = X_{F}(f).
\]
If \(X_{F} = \xi^i D_{F,x_{i}}^{\alpha}\), then:
\[
\mathcal{L}_{X_{F}} f= \xi^i D_{F,x_{i}}^{\alpha}f.
\]
\end{definition}
\begin{definition}
For a fractal vector field \(Y_{F}\), the fractal Lie derivative of  \(Y_{F}\) with respect to \(X_{F}\), \(\mathcal{L}_{X_{F}} Y_{F}\), is given by the commutator (or fractal Lie bracket) of the two fractal vector fields:
\[
\mathcal{L}_{X_{F}} Y_{F} = [X_{F}, Y_{F}] = X_{F}^i D_{F,x^{i}}^{\alpha}Y^j D_{F,x^{j}}^{\alpha} - Y_{F}^i  D_{F,x^{i}}^{\alpha}X^j  D_{F,x^{j}}^{\alpha}.
\]
\end{definition}
\begin{definition}
For a \( k \)-fractal form \( \omega_{F} \), the fractal interior product (or contraction) \( \iota_{X_{F}} \omega_{F} \) is defined as:
\[
(\iota_{X_{F}} \omega_{F})(X_1, X_2, \dots, X_{k-1}) = \omega(X, X_1, X_2, \dots, X_{k-1}),
\]
where \( X_1, X_2, \dots, X_{k-1} \) are fractal vector fields, and \( X_{F} \) is the fractal vector field being contracted. The fractal interior product \( \iota_{X_{F}} \omega_{F} \) inserts the fractal vector field \( X_{F} \) into the first slot of the \( k \)-fractal form \( \omega_{F} \), effectively reducing its degree by one.
\end{definition}
\begin{example}
For a 2-fractal form \( \omega_{F} = f(x) \, d_{F}^{\alpha}x \wedge d_{F}^{\alpha}y \), the contraction with a fractal vector field \( X_{F} = D_{F,x}^{\alpha} \) is:
\[
\iota_{X_{F}} \omega_{F} = f(x) \, d_{F}^{\alpha}y.
\]
In this example, \( X_{F} \) acts on the first component of the fractal differential form, resulting in a 1-fractal form.
\end{example}
\begin{definition}
For a fractal differential form \(\omega_{F}\), the fractal Lie derivative along a fractal vector field \(X_{F}\) is given by:
\[
\mathcal{L}_{X_{F}} \omega_{F} = d_{F}^{\alpha}(\iota_{X_{F}} \omega_{F}) + \iota_{X_{F}} (d_{F}^{\alpha}\omega_{F}).
\]
\end{definition}
\begin{definition}
Let \(M_{F}\) be a fractal smooth manifold, and let \(E_{F}=M_{F} \times \mathbb{R} \to M_{F}\) be a fractal smooth fiber bundle, where \(M_{F}\) is the fractal base space and \(E_{F}\) is the fractal total space. Given a fractal smooth section/function \(u: M_{F} \to E_{F}\), the k-fractal jet of \(u\) at a point \(p \in M_{F}\) is an equivalence class of fractal smooth sections, where two sections are equivalent if their values and the values of all their fractal derivatives up to order \(k\) agree at \(p\). The space of all such \(k\)-fractal jets at \(p\) is called the k-fractal jet space, denoted as \(J_{F}^k(M_{F}, E_{F})\).
\end{definition}
\begin{example}
For example, if \(u: M_{F} \to \mathbb{R}\) is a function and \(M_{F} = \mathbb{F}^n\), the k-fractal jet space \(J_{F}^k(M_{F}, \mathbb{R})\) contains all information about \(u(p)\) and its fractal partial derivatives up to order \(k\) at \(p\).
\end{example}

\begin{definition}
If \(M_{F}= F^n\) and \(u: M_{F} \to \mathbb{R}\), local coordinates on the fractal jet space \(J_{F}^k(M_{F}, \mathbb{R})\) are given by:
\begin{equation}
(x_1, x_2, \dots, x_n, u, D_{F,x_{1}}^{\alpha}u, D_{F,x_{2}}^{\alpha}u, \dots, D_{F,x_1 x_1}^{\alpha}u, D_{F,x_1 x_2}^{\alpha}u\dots, D_{F,x_1 \cdots x_n}^{\alpha}u),
\end{equation}
where \(x_1, x_2, \dots, x_n\) are the coordinates on the fractal base manifold \(M_{F}\), \(u\) is the fractal function value, and \(D_{F,x_{i}}^{\alpha}u\), $D_{F,x_i x_j}^{\alpha}u$, etc., represent the fractal partial derivatives of \(u\) with respect to the coordinates \(x_1, \dots, x_n\), up to order \(k\). For higher \(k\), the fractal jet space includes higher-order fractal partial derivatives of the function.
\end{definition}
 \begin{example}
 The first fractal jet space \(J_{F}^1(M_{F}, E_{F})\) contains information about the fractal function and its first $F^{\alpha}$-derivatives, which is enough to study first $\alpha$-order fractal partial differential equations (FPDEs). The second fractal jet space \(J_{F}^2(M_{F}, E_{F})\) includes second $F^{\alpha}$-derivatives, making it suitable for studying second $\alpha$-order FPDEs.
 \end{example}
\begin{definition}
Let \( X_{F} = \sum_{i=1}^{n} \xi^i(x_1, \dots, x_n) D_{F,x_{i}}^{\alpha} \) be a smooth fractal vector field on \( F^n \), where \( \xi^i \) are  smooth fractal functions, and \( x_1, \dots, x_n \) are the coordinates on \( \mathbb{R}^n \). The first fractal prolongation of the fractal vector field \( X_{F} \), denoted by \( X_{F}^{(1)} \), is a fractal vector field on the first-order fractal jet space \( J_{F}^1(F^n, \mathbb{R}) \), which includes both the coordinates \( x_1, \dots, x_n \) and the first-order $F^{\alpha}$-derivatives \( u_i = D_{F,x_{i}}^{\alpha}u \), where \( u \) is a fractal smooth function on \( F^n \). The fractal prolongation \( X^{(1)} \) is defined by:
\begin{equation}\label{r454322}
X_{F}^{(1)} = X_{F} + \sum_{i=1}^{n} \left((D_{F,x_j}^{\alpha}\xi^i) u_{i} + \xi^i (D_{F,x_j}^{\alpha} u_{i})  \right)D_{F,u_i}^{\alpha}.
\end{equation}
This process can be generalized to higher-order fractal jet spaces, leading to the k-th fractal prolongation \( X_{F}^{(k)} \), which includes higher-order $F^{\alpha}$-derivatives of the function \( u \).
\end{definition}

\section{Symmetries of the Fractal Nonlinear n-th \(\alpha\)-Order Differential Equation \label{S-4}}
In this section, we examine the symmetries of the fractal nonlinear \(n\)-th \(\alpha\)-order differential equation.\\
Consider a general fractal differential equation of the form:
\begin{equation}\label{2342sws}
\mathcal{F}(x, y, D_{F,x}^{\alpha}y, D_{F,x}^{2\alpha}y, \ldots, D_{F,x}^{n\alpha} y) = \mathcal{F}(x, y, y^{(\alpha)}, y^{(2\alpha)}, \ldots, y^{(n\alpha)}) = 0,
\end{equation}
where \(y:F\rightarrow \mathbb{R}\) is the unknown function, and \(y^{(\alpha)}, y^{(2\alpha)}, \ldots\) are the fractal derivatives of \(y\) with respect to \(x\). Let us introduce infinitesimal transformations as:
\[
(S_{F}^{\alpha}(x))^* = S_{F}^{\alpha}(x) + \epsilon \xi(x, y), \quad (S_{F}^{\alpha}(y))^* = S_{F}^{\alpha}(y) + \epsilon \phi(x, y),
\]
where \(\epsilon\) is a small parameter, and \(\xi(x, y)\) and \(\phi(x, y)\) are functions to be determined. The associated fractal infinitesimal generator is defined as:
\[
V_{F} = \xi(x, y) D_{F,x}^{\alpha} + \phi(x, y) D_{F,y}^{\alpha}.
\]

To account for the fractal derivatives of \(y\), we prolong the vector field \(V_{F}\) to \(V_{F}^{(n\alpha)}\), including terms up to the n-th \(\alpha\)-order fractal derivative in Eq.\eqref{2342sws}:
\[
V_{F}^{(n\alpha)} = V_{F} + \eta^{(1)} D_{F,y^{(\alpha)}}^{\alpha} + \eta^{(2)} D_{F,y^{(2\alpha)}}^{\alpha} + \ldots + \eta^{(n)} D_{F,y^{(n\alpha)}}^{\alpha},
\]
where the fractal prolongation coefficients \(\eta^{(k)}\) are recursively defined by:
\[
\eta^{(k)} = D_{F}^{\alpha} \eta^{(k-1)} - y^{(k\alpha)} D_{F}^{\alpha} \xi, \quad \eta^{(0)} = \phi - \xi y^{(\alpha)}.
\]
Here, the total fractal derivative \(D_{F}^{\alpha}\) is given by:
\[
D_{F}^{\alpha} = D_{F,x}^{\alpha} + y^{(\alpha)} D_{F,y}^{\alpha} + y^{(2\alpha)} D_{F,y^{(\alpha)}}^{\alpha} + \ldots.
\]
Next, applying the prolonged vector field to Eq.\eqref{2342sws}, we obtain:
\[
V^{(n\alpha)}\left(\mathcal{F}(x, y, y^{(\alpha)}, y^{(2\alpha)}, \ldots, y^{(n\alpha)})\right) = 0.
\]
This expands to:
\begin{equation}\label{67bcerd951}
\xi D_{F,x}^{\alpha}\mathcal{F} + \phi D_{F,y}^{\alpha}\mathcal{F} + \eta^{(1)}  D_{F,y^{(\alpha)}}^{\alpha}\mathcal{F} + \eta^{(2)} D_{F,y^{(2\alpha)}}^{\alpha}\mathcal{F} + \ldots + \eta^{(n)} D_{F,y^{(n\alpha)}}^{\alpha}\mathcal{F}  = 0.
\end{equation}
In this expression, the first two terms \(\xi D_{F,x}^{\alpha}\mathcal{F} + \phi D_{F,y}^{\alpha}\mathcal{F}\) represent the fractal Lie derivative, while the remaining terms involve the fractal prolongation coefficients \(\eta^{(k)}\). Eq.\eqref{67bcerd951} serves as the determining equation for the infinitesimal generators \(\xi(x, y)\) and \(\phi(x, y)\), and solving this equation reveals the symmetries of the fractal differential equation.

\section{Symmetries of the Fractal Linear n-th $\alpha$-Order Differential Equation \label{S-5}}
In this section, we explore the symmetries of the fractal linear \(n\)-th \(\alpha\)-order differential equation.\\
Consider a general linear \(n\)-th order fractal differential equation  of the form:
\begin{equation}\label{23}
\mathcal{L}(x, y, y^{(\alpha)}, y^{(2\alpha)}, \ldots, y^{(n\alpha)}) = a_n(x) y^{(n\alpha)} + a_{n-1}(x) y^{((n-1)\alpha)} + \ldots + a_1(x) y^{(\alpha)} + a_0(x) y = 0,
\end{equation}
where \(y:F\rightarrow \mathbb{R}\) is the unknown fractal function and the \(a_i:F\rightarrow \mathbb{R}\) are known fractal coefficient functions. To find the symmetries of Eq.\eqref{23}, we first define fractal infinitesimal transformations  as:
\[
(S_{F}^{\alpha}(x))^* = S_{F}^{\alpha}(x) + \epsilon \xi(x, y), \quad (S_{F}^{\alpha}(y))^* = S_{F}^{\alpha}(y) + \epsilon \phi(x, y),
\]
where \(\epsilon\) is a small parameter, and \(\xi(x, y)\) and \(\phi(x, y)\) are fractal functions to be determined. The associated fractal infinitesimal generator \(V_{F}\) is given by:
\[
V_{F} = \xi(x, y) D_{F,x}^{\alpha} + \phi(x, y) D_{F,y}^{\alpha}.
\]
We need to extend the fractal vector field \(V_{F}\) to include higher $\alpha$-order fractal derivatives of \(y\). The fractal prolonged vector field \(V_{F}^{(n\alpha)}\) up to the \(n\)-th order is given by:
\[
V_{F}^{(n\alpha)} = V_{F} + \eta^{(1)}D_{F,y^{(\alpha)}}^{\alpha} + \eta^{(2)}D_{F,y^{(2\alpha)}}^{\alpha} + \ldots + \eta^{(n)} D_{F,y^{(n\alpha)}}^{\alpha},
\]
where the fractal prolongation coefficients \(\eta^{(k)}\) are defined recursively as:
\[
\eta^{(k)} = D_{F}^{\alpha} \eta^{(k-1)} - y^{(k)} D_{F}^{\alpha} \xi, \quad \eta^{(0)} = \phi - \xi y^{\alpha}.
\]
Apply the fractal prolonged vector field \(V^{(n\alpha)}\) to Eq.\eqref{23}, we obtain:
\[
V^{(n\alpha)}\left(\mathcal{L}(x, y, y^{(\alpha)},  y^{(2\alpha)}, \ldots, y^{(n\alpha)})\right) = 0.
\]
This leads to:
\begin{equation}\label{3455mn}
\xi D_{F,x}^{\alpha} \mathcal{L} + \phi D_{F,y}^{\alpha} \mathcal{L}+ \eta^{(1)} D_{F,y^{\alpha}}^{(\alpha)} \mathcal{L} + \eta^{(2)} D_{F,y^{2\alpha}}^{(\alpha)} \mathcal{L} + \ldots + \eta^{(n)} D_{F,y^{n\alpha}}^{(\alpha)} \mathcal{L} = 0.
\end{equation}
After expanding and simplifying Eq.\eqref{3455mn}, one collect terms involving different fractal derivatives of \(y\) (i.e., \(y^{(\alpha)}, y^{(2\alpha)}, \ldots, y^{(n\alpha)}\)) and set the coefficients of each fractal derivative to zero. This produces a system of partial fractal differential equations (FPDEs) for \(\xi(x, y)\) and \(\phi(x, y)\), which is called  the fractal determining equations. Solving the fractal determining equations give  the infinitesimal generators \(\xi(x, y)\) and \(\phi(x, y)\). These solutions represent the symmetries of Eq.\eqref{23}.

\begin{example}
Consider the second $\alpha$-order linear fractal differential equation:
\begin{equation}\label{3422kkjtyzaw}
D_{F}^{2\alpha}y + p(x)D_{F}^{\alpha}y  + q(x) y = 0.
\end{equation}
The fractal infinitesimal generator is:
\[
V_{F} = \xi(x, y) D_{F,x}^{\alpha} + \phi(x, y) D_{F,y}^{\alpha}.
\]
Hence the fractal prolong the fractal vector field is:
\[
V_{F}^{(2)} = V_{F} + \eta^{(1)} D_{F,y^{(\alpha)}}^{\alpha} + \eta^{(2)} D_{F,y^{(2\alpha)}}^{\alpha},
\]
where:
\[
\eta^{(1)} = D_{F}^{\alpha} \phi - y^{(\alpha)} D_{F}^{\alpha} \xi, \quad \eta^{(2)} = D_{F}^{\alpha}  \eta^{(1)} - y^{(2\alpha)} D_{F}^{\alpha}  \xi.
\]
Applying the fractal prolonged vector field to Eq.\eqref{3422kkjtyzaw} gives:
\[
V_{F}^{(2)}\left(D_{F}^{2\alpha}y + p(x)D_{F}^{\alpha}y  + q(x) y \right) = 0.
\]
After substituting \(\eta^{(1)}\) and \(\eta^{(2)}\) into the fractal prolonged equation and expanding all terms, we collect terms involving \(y^{(2\alpha)}, y^{(\alpha)},\) and \(y\) and set the coefficients of each fractal derivative to zero. This gives us a system of fractal partial differential equations  for \(\xi(x, y)\) and \(\phi(x, y)\). The coefficient of \(y^{(2\alpha)}\) gives:
\[
\eta^{(2)} = \xi(x, y) = 0.
\]
This implies that \(\xi\) does not depend on \(y\), so \(\xi = \xi(x)\). The coefficient of \(y^{\alpha}\) gives:
\[
\eta^{(1)} + p(x) \xi(x) = 0.
\]
Substituting \(\eta^{(1)} = D_{F}^{\alpha} \phi(x, y) - y^{(\alpha)} D_{F}^{\alpha} \xi(x)\) leads to a FPDE involving \(\phi(x, y)\) and \(\xi(x)\). The coefficient of \(y\) gives:
\[
q(x) \xi(x) + D_{F}^{\alpha} \phi(x, y) = 0.
\]
This provides another condition for \(\phi(x, y)\) and \(\xi(x)\). By solving the system of FPDEs obtained from the different terms, we determine the explicit forms of \(\xi(x)\) and \(\phi(x, y)\). These solutions describe the symmetries of the second $\alpha$-order fractal differential equation.
\end{example}

\section{ Symmetries of the Fractal Linear First $\alpha$-Order Differential Equation \label{S-6}}
In this section, we investigate the symmetries of the fractal linear first \(\alpha\)-order differential equation.\\
Consider the first $\alpha$-order linear differential equation:
\begin{equation}\label{iiuoojnhy}
D_{F,x}^{\alpha}y =y^{(\alpha)}= f(x, y).
\end{equation}
 We define infinitesimal transformations as:
\begin{equation}\label{e344385}
  (S_{F}^{\alpha}(x))^* = S_{F}^{\alpha}(x) + \epsilon \xi(x, y), \quad (S_{F}^{\alpha}(y))^* = S_{F}^{\alpha}(y) + \epsilon \phi(x, y).
\end{equation}
where \(\epsilon\) is a small parameter, and \(\xi(x, y)\) and \(\phi(x, y)\) are fractal functions to be determined. The infinitesimal generator corresponding to Eq.\eqref{e344385} is:
\[
V_{F} = \xi(x, y)D_{F,x}^{\alpha} + \phi(x, y) D_{F,y}^{\alpha}.
\]
To account for the fractal derivative \(y^{(\alpha)}\), we prolong the fractal vector field \(V_{F}\) by adding a term for \(y^{(\alpha)}\). The prolonged fractal vector field \(V_{F}^{(1)}\) is:
\[
V^{(1)}_{F} = V_{F} + \eta^{(1)} D_{F,y^{\alpha}},
\]
where \(\eta^{(1)}\) is the fractal prolongation term given by:
\[
\eta^{(1)} = D_{F}^{\alpha} \phi - y^{\alpha}D_{F}^{\alpha} \xi.
\]
 Expanding this expression, we get:
\[
\eta^{(1)} = D_{F,x}^{\alpha} \phi + y^{\alpha}D_{F,y}^{\alpha}  \phi - y^{\alpha} (D_{F,x}^{\alpha} \xi + y^{\alpha} D_{F,x}^{\alpha} \xi ).
\]
Next, we apply the fractal prolonged vector field \(V_{F}^{(1)}\) to the Eq.\eqref{iiuoojnhy}. This gives:
\[
V^{(1)}_{F} \left( y^{(\alpha)} - f(x, y) \right) = 0.
\]
Expanding this equation, we get:
\[
\xi(x, y)D_{F,x}^{\alpha}  \left( y^{(\alpha)} - f(x, y) \right) + \phi(x, y) D_{F,y}^{\alpha} \left( y^{(\alpha)} - f(x, y) \right) + \eta^{(1)} D_{F,y^{(\alpha)}}^{\alpha} \left( y^{(\alpha)} - f(x, y) \right) = 0.
\]
This simplifies to:
\begin{equation}\label{123ii7uy6t5r4}
 D_{F,x}^{\alpha}  \phi +  y^{\alpha}  D_{F,y}^{\alpha}\phi - \left(D_{F,x}^{\alpha}  \xi + y^{\alpha} D_{F,y}\xi \right) y^{\alpha} - \xi D_{F,x}^{\alpha} f  - \phi D_{F,y}^{\alpha} f  = 0.
  \end{equation}

By separating Eq.\eqref{123ii7uy6t5r4} into terms involving \(y^{\alpha}\) and equate coefficients  leads to a system of fractal differential equations in terms of \(\xi(x)\) and \(\phi(x, y) =S_{F}^{\alpha}(y) D_{F}^{\alpha} \xi(x)  + h(x) \) as follows:
\begin{align}\label{qw1221}
  D_{F}^{2\alpha} \xi(x) - D_{F,y}^{\alpha}f(x,y)D_{F}^{\alpha} \xi(x) &= 0\nonumber\\
 D_{F}^{\alpha}h(x)  - \xi(x) D_{F,x}^{\alpha}f(x,y)  - h(x)D_{F,y}^{\alpha}f(x,y) &= 0
\end{align}
These fractal equations determine the form of the infinitesimals \(\xi(x)\) and \(\phi(x, y)\) depending on the function \( f(x, y) \). Solving these equations will provide the fractal symmetries of the fractal differential equation.

\begin{remark}
Once the infinitesimal generators \(\xi(x, y)\) and \(\phi(x, y)\) are determined, they represent the symmetries of the fractal  differential equation. These symmetries correspond to transformations  that leave the differential equation invariant. Such symmetries can be useful in simplifying the equation, reducing its order, or finding conserved quantities (via Noether's theorem).
\end{remark}
\begin{example}
Consider the following first \(\alpha\)-order fractal differential equation:
\begin{equation}\label{44543mn}
D_{F}^{\alpha}y = S_{F}^{\alpha}(x) + S_{F}^{\alpha}(y),
\end{equation}
where \(D_{F}^{\alpha}\) denotes the fractal derivative, and \(S_{F}^{\alpha}(x)\) and \(S_{F}^{\alpha}(y)\) are fractal functions.
Using Equation \eqref{qw1221}, we derive the system of fractal determining equations for this fractal differential equation as:
\[
D_{F}^{2\alpha}\xi(x) - D_{F}^{\alpha}\xi(x) = 0,
\]
\[
D_{F}^{\alpha}h(x) - \xi(x) - h(x) = 0.
\]
By solving the system for \(\xi(x)\) and \(h(x)\), we obtain the following solutions:
\[
\xi(x) = c_1 \exp(S_{F}^{\alpha}(x)) + c_2,
\]
\[
h(x) = c_1 S_{F}^{\alpha}(x) \exp(S_{F}^{\alpha}(x)) + c_2 + c_3 \exp(S_{F}^{\alpha}(x)),
\]
where \(c_1\), \(c_2\), and \(c_3\) are constants of integration. Thus, the fractal infinitesimals \(\xi(x)\) and \(\phi(x, y)\) are:
\[
\xi(x) = c_1 \exp(S_{F}^{\alpha}(x)) + c_2,
\]
\[
\phi(x, y) = S_{F}^{\alpha}(y) D_{F}^{\alpha} \xi(x) + h(x) = c_1 S_{F}^{\alpha}(y) \exp(S_{F}^{\alpha}(x)) + \left( c_1 S_{F}^{\alpha}(x) \exp(S_{F}^{\alpha}(x)) + c_3 \exp(S_{F}^{\alpha}(x)) + c_2 \right).
\]
Using the transformations provided by the fractal infinitesimals \(\xi(x)\) and \(\phi(x, y)\) in Equation \eqref{44543mn}, we can transform the fractal differential equation into a separable form, making it easier to solve.
\end{example}

\begin{example}
Consider the second $\alpha$-order linear fractal differential equation  as the:
\begin{equation}\label{o9i00}
D_{F}^{2\alpha}y + y = 0.
\end{equation}
Assume the fractal infinitesimal transformations as follows:
\[
(S_{F}^{\alpha}(x))^* =S_{F}^{\alpha}(x) + \epsilon \xi(x, y), \quad (S_{F}^{\alpha}(y))^* = S_{F}^{\alpha}(y) + \epsilon \phi(x, y),
\]
where \(\epsilon\) is a small parameter, and \(\xi(x, y)\) and \(\phi(x, y)\) are fractal functions. Defining  the fractal infinitesimal generator as:
   \[
   V_{F} = \xi(x, y) D_{F,x}^{\alpha} + \phi(x, y)D_{F,y}^{\alpha}.
   \]
The  prolong the fractal vector field to include first and second fractal derivatives gives:
   \[
   V^{(2)}_{F} = V_{F} + \eta^{(1)}D_{F,y^{(\alpha)}}^{\alpha} + \eta^{(2)} D_{F,y^{(2\alpha)}}^{\alpha},
   \]
   where the coefficients \(\eta^{(1)}\) and \(\eta^{(2)}\) are given by:
 \begin{align}
\eta^{(1)} &= D_{F}^{\alpha}  \phi - y^{(\alpha)} D_{F}^{\alpha}  \xi,\nonumber\\
\eta^{(2)} &= D_{F}^{\alpha}  \eta^{(1)} - y^{(2\alpha)} D_{F}^{\alpha}  \xi.
\end{align}
Here, \(D_{F}^{\alpha}\) is the fractal total derivative with respect to \(x\). By applying the fractal prolonged vector field to Eq.\eqref{o9i00}, we have:
\[
V_{F}^{(2)}\left( D_{F}^{2\alpha}y + y \right) = 0.
\]
This gives:
\[
\xi D_{F,x}^{\alpha}(D_{F}^{2\alpha}y + y) + \phi D_{F,y}^{\alpha} (D_{F}^{2\alpha}y + y) + \eta^{(1)} D_{F,y^{(\alpha)}}^{\alpha}(D_{F}^{2\alpha}y + y) + \eta^{(2)} D_{F,y^{(2\alpha)}}^{\alpha}(D_{F}^{2\alpha}y + y) = 0.
\]
Since \(D_{F,y^{(\alpha)}}^{\alpha}(D_{F}^{2\alpha}y + y) = 0\) and \(D_{F,y^{(2\alpha)}}^{\alpha}(D_{F}^{2\alpha}y + y) = \chi_{F}\), the equation simplifies to:
\[
\eta^{(2)} + \phi = 0.
\]
This gives:
\[
D_{F,x}^{2\alpha} \phi + 2 y^{(\alpha)}D_{F,x,y}^{2\alpha} \phi + (y^{(\alpha)})^2 D_{F,y}^{2\alpha} \phi - y^{(\alpha)} D_{F,x}^{2\alpha}\xi - y^{(\alpha)}D_{F,x,y}^{2\alpha}  \xi - y^{(2\alpha)}D_{F,x}^{\alpha} \xi + \phi = 0.
\]
To solve this system, we separate the equation into terms involving different powers of \( y^{(\alpha)}\), \( y^{(2\alpha)} \), and constants.
\begin{enumerate}
  \item From the term involving \( y^{(2\alpha)} \), we get:
   \[
   D_{F,x}^{\alpha} \xi = 0.
   \]
   This implies \( \xi(x, y) \) is independent of \( x \), so:
   \[
   \xi(x, y) = \xi(y).
   \]
  \item From the terms involving \( y^{(\alpha)} \), we get:
   \[
   D_{F,x,y}^{2\alpha} \phi = 0, \quad D_{F,x}^{2\alpha} \xi= 0.
   \]
   These imply:
   \[
   \phi(x, y) = \phi(x) + A S_{F}^{\alpha}(y).
   \]

  \item Finally, the constant terms give:
   \begin{equation}\label{34oppq}
   D_{F,x}^{2\alpha}\phi(x) + \phi(x) = 0.
   \end{equation}
\end{enumerate}
Eq.\eqref{34oppq} is a second $\alpha$-order linear homogeneous fractal differential equation, whose solution is:
   \[
   \phi(x) = c_3 \cos(S_{F}^{\alpha}(x)) + c_4 \sin(S_{F}^{\alpha}(x)).
   \]
From the symmetry conditions, we find the general solutions:
\[
\xi(x, y) = c_1 S_{F}^{\alpha}(x) + c_2,
\]
\[
\phi(x, y) = c_1 S_{F}^{\alpha}(y) + c_3 \cos(S_{F}^{\alpha}(x)) + c_4 \sin(S_{F}^{\alpha}(x)).
\]
This completes the process of finding the infinitesimals \( \xi(x, y) \) and \( \phi(x, y) \). The fractal symmetry generators corresponding to these solutions are:
  \[
  V_1 = D_{F,x}^{\alpha}, \quad V_2 = S_{F}^{\alpha}(x) D_{F,x}^{\alpha} + S_{F}^{\alpha}(x) D_{F,y}^{\alpha}, \quad V_3 = \cos(S_{F}^{\alpha}(x)) D_{F,y}^{\alpha}, \quad V_4 = \sin(S_{F}^{\alpha}(x)) D_{F,y}^{\alpha}.
  \]
These generators represent \(V_1\) as time translation symmetry, \(V_2\) as scaling symmetry, and \(V_3\) and \(V_4\) as oscillatory symmetries (solutions of the homogeneous fractal differential equation).
As shown in Figure \ref{fig:cantor_set_staircase}, the visualization illustrates the base solution and various symmetry transformations of the Cantor set's integral staircase function \( S_{F}^{\alpha}(x) \).

\begin{figure}[H]
    \centering
    \includegraphics[width=\textwidth]{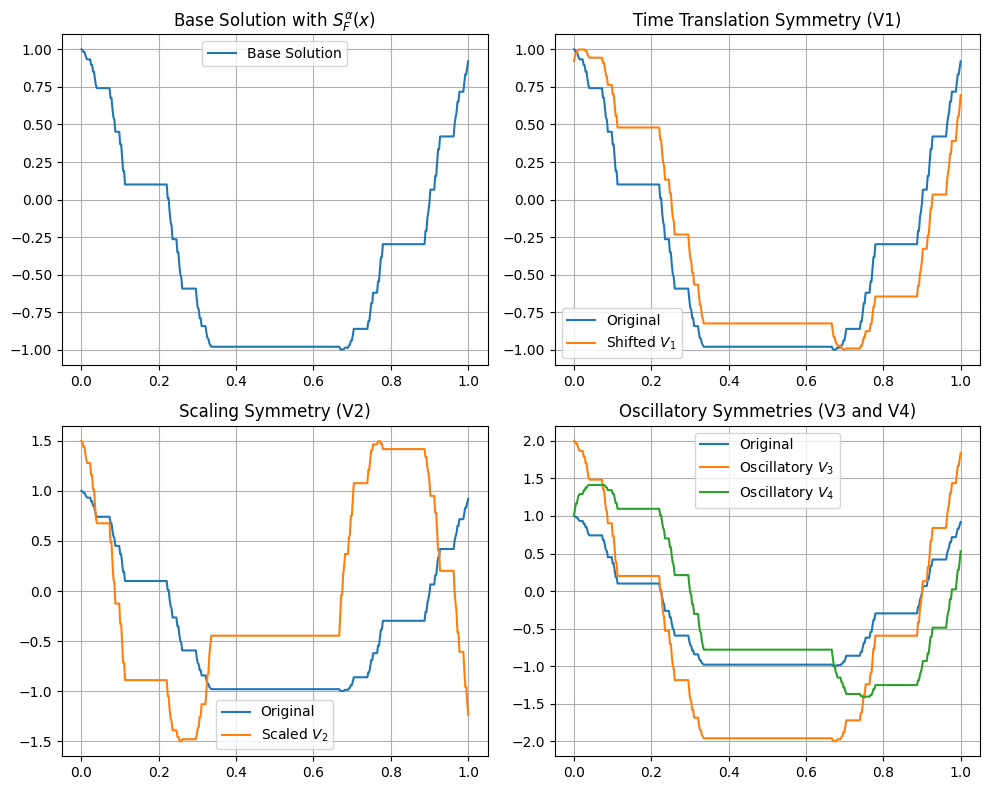}
    \caption{Visualization of the base solution and symmetry transformations of the Cantor set's integral staircase function \( S_{F}^{\alpha}(x) \). The figure illustrates the base solution alongside various symmetry transformations, including time translation, scaling, and oscillatory symmetries.}
    \label{fig:cantor_set_staircase}
\end{figure}

\end{example}

\section{Conclusion \label{S-7}}

In conclusion, this paper has provided a comprehensive examination of fractal calculus, detailing foundational concepts such as Fractal Noether's Theorem, fractal differential forms, vector fields, and Lie derivatives. The introduction of \(k\)-fractal jet space and \(k\)-th fractal prolongation has enriched the theoretical framework necessary for exploring the symmetries of fractal differential equations.

The analysis of both nonlinear and linear \(n\)-th \(\alpha\)-order differential equations demonstrates the intricate relationships between fractal structures and their associated symmetries. By deriving the symmetries of the fractal linear first \(\alpha\)-order differential equation, we have highlighted the applicability of these principles across various mathematical contexts.

The examples illustrated throughout the paper serve not only to reinforce the theoretical concepts discussed but also to showcase the potential for future research in fractal calculus and its applications. As we continue to uncover the complexities of fractal structures, this work contributes to a deeper understanding of how fractality influences mathematical modeling and solutions within diverse fields.\\
\textbf{Conflict of interest:}\\
The authors declare that they do not have any competing interests.\\
\textbf{Dedicated to the memory of Mohammad A. Asadi-Golmankhaneh}



\bibliographystyle{elsarticle-num}
\bibliography{Lilgropufrac}

\end{document}